\providecommand{\U}[1]{\protect\rule{.1in}{.1in}}
\providecommand{\bigsqcap}{\mathop{\mathpalette\@updown\bigsqcup}}
\newcommand*{\@updown}
[2]{\rotatebox[origin=c]{180}{$\m@th#1#2$}}
\newtheorem{theorem}{Theorem}
\theoremstyle{plain}
\newtheorem{definition}{Definition}
\newtheorem{example}{Example}
\newtheorem{proposition}{Proposition}
\newtheorem{remark}{Remark}
\numberwithin{equation}{section}
\begin{document}
\title{Comparing Diagonals on the Associahedra}
\author{Samson Saneblidze}
\author{Ronald Umble}
\address{A. Razmadze Mathematical Institute, I. Javakhishvili Tbilisi State
University 2, Merab Aleksidze II Lane, 0193 Tbilisi, Georgia}
\email{sane@rmi.ge}
\address{Department of Mathematics, Millersville University of Pennsylvania,
Millersville, PA 17551, USA}
\email{ron.umble@millersville.edu}
\date{August 22, 2022; revised February 23, 2024}
\subjclass[2020]{Primary 55P48, 55P99; Secondary 52B05, 52B11.}
\keywords{Associahedron, permutahedron, diagonal approximation, magical
formula. }

\begin{abstract}
We prove that the formula for the diagonal approximation $\Delta_{K}$ on J.
Stasheff's $n$-dimensional associahedron $K_{n+2}$ derived by the current
authors in \cite{SU} agrees with the \textquotedblleft magical
formula\textquotedblright\ for the diagonal approximation $%
\Delta_{K}^{\prime }$ derived by Markl and Shnider in \cite{MS}, by J.-L.
Loday in \cite{JL}, and more recently by Masuda, Thomas, Tonks, and Vallette
in \cite{MTTV}.
\end{abstract}

\maketitle

%\received{Month Day, Year} \revised{Month Day, Year} \accepted{Month Day,Year} \published{Month Day, Year} \submitted{Johannes Huebschmann} \volumeyear{2023} \volumenumber{25} \issuenumber{1} \startpage{401} \articlenumber{1} \owner{International Press}

\vspace{.1in}

\begin{center}
\textit{Dedicated to the memory of Jean-Louis Loday}
\end{center}

\bigskip

\section{Introduction}

Recently there has been renewed interest in explicit combinatorial diagonal
approximations on J. Stasheff's $n$-dimensional associahedron $K_{n+2}$ \cite%
{St}. Markl and Shnider (M-S) in \cite{MS}, J.-L. Loday in \cite{JL}, and
more recently Masuda, Thomas, Tonks, and Vallette (MTTV) in \cite{MTTV}
constructed a diagonal $\Delta_{K}^{\prime}$ on $K_{n+2}$ whose components
are \textquotedblleft matching pairs\textquotedblright\ of faces, which in
the words of Jean-Louis Loday, are \textquotedblleft pairs of cells of
matching dimensions and comparable under the Tamari
order.\textquotedblright\ By definition, every component of the
combinatorial diagonal $\Delta_{K}$ on $K_{n+2}$ constructed by the current
authors (S-U) in \cite{SU} is a matching pair. In this paper we prove that
every matching pair is a component of $\Delta_{K}.$ Thus the S-U formula for 
$\Delta_{K}$ and the \textquotedblleft magical formula\textquotedblright\
for $\Delta_{K}^{\prime}$ agree (see Definitions \ref{S-U} and \ref{magical}%
).

Historically, S-U were the first to derive a cellular
combinatorial/differential graded formula for $\Delta_{K},$ M-S were the
first to prove the magical formula for $\Delta_{K}^{\prime},$ and MTTV were
the first to construct a point-set topological diagonal map, which descends
to the magical formula at the cellular level.

Using the geometric methods of MTTV, Laplante-Anfossi created a general
framework for studying diagonals on any polytope in \cite{L}. In this
framework, a choice of diagonal on the $n$-dimensional permutahedron $%
P_{n+1} $ is given by a choice of chambers in its fundamental hyperplane
arrangement (\cite{L}, Def. 1.18). While the specific diagonal $\Delta
_{P}^{\prime }$ on $P_{n+1}$ studied in \cite{L} differs from the S-U
diagonal $\Delta _{P}$, the diagonal $\Delta _{K}^{\prime }$ on $K_{n+2}$
induced by $\Delta _{P}^{\prime }$ agrees with $\Delta _{K}.$ \bigskip

\noindent\textbf{Acknowledgments. } We wish to thank Bruno Vallette for
sharing his perspective on the history of combinatorial diagonals on $K_{n}$%
, and Guillaume Laplante-Anfossi and our anonymous referee for their helpful
editorial suggestions.

\section{Diagonals Induced by $\Delta_P$}

Let $S_{n}$ be the symmetric group on the finite set $\underline{n}=\left\{
1,2,\ldots ,n\right\} .$ The permutahedron $P_{n}$ is the convex hull of $n!$
vertices $\{\left( \sigma(1),\ldots,\sigma(n)\right):\sigma\in S_n\}\subset 
\mathbb{R}^{n}$. As a cellular complex, $P_{n}$ is an $\left( n-1\right) $%
-dimensional convex polytope whose $\left( n-p\right) $-faces are indexed by
(ordered) partitions $A_{1}|\cdots|A_{p}$ of $\underline{n}$, $1\leq p\leq n$%
. Denoting the set of ordered partitions of $\underline{n}$ by $P\left(
n\right) , $ the faces of $P_{n}$ are identified with elements of $P\left(
n\right) $ in the standard way.

Let $X$ be an $n$-dimensional polytope that admits a (surjective) cellular
projection map $p:P_{n+1}\rightarrow X$ and a realization as a subdivision
of the $n$-cube $I^{n}$, i.e., for $0\leq k\leq n$, each $k$-cell ($k$%
-subcube) of $I^n$ is a union of $k$-cells of $X$, any two of which
intersect along their boundaries.

For example, $X=P_{n}$ can be realized as a subdivision of $I^{n-1}$
inductively as follows: Identify $P_{1}$ with $1\in P\left( 1\right) .$ If $%
P_{n-1}$ has been constructed and $a=A_{1}|\cdots|A_{p}\in P\left(
n-1\right) $ is a face, let $a_{0}=0,$ $a_{j}=\#\left(
A_{p-j+1}\cup\cdots\cup A_{p}\right) $ for $0<j<p,\ a_{p}=\infty$, and
define $\frac{1}{2^{\infty}}:=0$. Let\emph{\ }$I\left( a\right) :=I_{1}\cup
I_{2}\cup\cdots\cup I_{p},$ where $I_{j}:=[1-\frac{1}{2^{a_{j-1}}},1-\frac{1%
}{2^{a_{j}}}];$ then $P_{n}=\bigcup\nolimits_{a\in P\left( n-1\right)
}a\times I\left( a\right) ,$ where the identification of faces with
partitions is given by 
\begin{equation*}
\begin{tabular}{c|rr}
$_{\mathstrut}^{\mathstrut}$\textbf{Face of }$a\times I\left( a\right) $ & 
\textbf{Partition in }$P\left( n\right) $ &  \\ \hline
$_{\mathstrut}^{\mathstrut}a\times0$ & \multicolumn{1}{|c}{$A_{1}|\cdots
|A_{p}|n$} & \multicolumn{1}{l}{} \\ 
$_{\mathstrut}^{\mathstrut}a\times(I_{j}\cap I_{j+1})$ & \multicolumn{1}{|c}{%
$A_{1}|\cdots|A_{p-j}|n|A_{p-j+1}|\cdots|A_{p},$} & \multicolumn{1}{l}{$%
1\leq j\leq p-1$} \\ 
$_{\mathstrut}^{\mathstrut}a\times1$ & \multicolumn{1}{|c}{$%
n|A_{1}|\cdots|A_{p},$} & \multicolumn{1}{l}{} \\ 
$_{\mathstrut}^{\mathstrut}a\times I_{j}$ & \multicolumn{1}{|c}{$%
A_{1}|\cdots|A_{p-j+1}\cup n|\cdots|A_{p},$} & \multicolumn{1}{l}{$1\leq
j\leq p$}%
\end{tabular}
\ \ \ \ 
\end{equation*}

\noindent(see Figures 1 and 2). We refer to a vertex common to $P_{n}$ and $%
I^{n-1}$ as a \emph{cubical vertex}. Thus $a$ is a cubical\ vertex of $P_{n}$
if and only if $a|n$ and $n|a$ are cubical\ vertices of $P_{n+1}.$ Indeed, a
cubical vertex has the form $a=a_1|\cdots|a_{i-1}|1|a_{i+1}|\cdots|a_{n} $,
where $a_1>\cdots >a_{i-1}$ and $a_{i+1}<\cdots <a_{n}.$

We begin with a review of the diagonal $\Delta_{P}$ and the diagonal $%
\Delta_{X}$ induced by the projection $p;$ then $\Delta_{K}$ is obtained by
setting $X=K_{n+2}$. Whereas the vertices of $P_{n+1}$ are identified with
the permutations in $S_{n+1}$, the \emph{weak order} on $S_{n+1}$ given by $%
\cdots|x_{i}|x_{i+1}|\cdots <\cdots|x_{i+1}|x_{i}|\cdots$ if $x_{i}<x_{i+1}$
extends to a partial order (p-o) and the associated Hasse diagram orients
the $1$-skeleton of $P_{n+1}$ \cite{CP}. Denote the minimal and maximal
vertices of a face $e$ of $P_{n+1}$ by $\min e$ and $\max e$, respectively,
and define $e\leq e^{\prime}$ if there exists an oriented edge-path in $%
P_{n+1}$ from $\max e$ to $\min e^{\prime}$. Then $p$ induces a p-o on the
cells of $X$. For example, when the faces of $P_{n+1}$ are indexed by planar
leveled trees (PLTs) with $n+2$ leaves and the faces of $K_{n+2}$ are
indexed by planar rooted trees (PRTs) with $n+2$ leaves (without levels),
Tonks' projection $p=\theta$ given by forgetting levels \cite{Tonks} induces
the \emph{Tamari order} on the faces $\{\theta(T_{i})\}$ of $K_{n+2}$ given
by $\theta(T_{i})\leq\theta(T_{j})$ if $T_{i}\leq T_{j}$. In particular, the
vertices of $K_{n+1}$ form a subset of the vertices $P_{n}$ and the Tamari
order restricted to this subset agrees with the weak order.

Let $e$ be a cell of $X$ and let $\left\vert e\right\vert $ denote its
dimension$.$ A $k$\emph{-subdivision cube of }$e$ is a set of faces of $e$
whose union is a $k$-subcube of $I^{n}$ for some $k\leq n.$ For example,
when $e$ is the top dimensional cell of $P_{4},$ the facets in $\left\{
2|134,24|13\right\} $ and $\{2|134,24|13,23|14,234|1\}$ form $2$-subdivision
cubes of $e$, but any three in the latter do not (see Figure 2). Denote the
set of vertices of $e$ by $\mathcal{V}_{e}$ (when $e=X$ we suppress the
subscript $e$). Given a vertex $v\in\mathcal{V}_{e},$ let $I_{v,1}^{k_{1}}$
and $I_{v,2}^{k_{2}}$ be $k_{i}$-subdivision cubes of $e$ such that $\max
I_{v,1}^{k_{1}}=\min I_{v,2}^{k_{2}}=v$ and $k_{1}+k_{2}=|e|;$ then $\left(
I_{v,1}^{k_{1}},I_{v,2}^{k_{2}}\right) $ is a \emph{pair of} $\left(
k_{1},k_{2}\right) $-\emph{subdivision cubes of }$e.$ Denote the set of all
such pairs by $e_{v}$ and let $(\mathbf{I}_{v,1}^{k_{1}},\mathbf{I}%
_{v,2}^{k_{2}})_{e}$ denote its unique maximal element; then $\left(
I_{v,3}^{k_{3}},I_{v,4}^{k_{4}}\right) \subseteq(\mathbf{I}_{v,1}^{k_{1}},%
\mathbf{I}_{v,2}^{k_{2}})_{e}$ for all $(I_{v,3}^{k_{3}},I_{v,4}^{k_{4}})\in
e_{v}.$ For example, when $e$ is the top dimensional cell of $P_{4}$ and $%
v=4|2|3|1,$ we have $(\mathbf{I}_{v,1}^{2},\mathbf{I}_{v,2}^{1})_{e}=\left(
\left\{ 2|134,24|13\right\} ,\left\{ 4|23|1\right\} \right) $. For an
explicit description of $\left( \mathbf{I}_{v,1}^{k_{1}},\mathbf{I}%
_{v,2}^{k_{2}}\right) _{\!e}$ when $e\subseteq P_{n}$ see (\ref{maxpair1})
below. \vspace{.2in}

\begin{center}
\setlength{\unitlength}{0.0004in}%
\begin{picture}
(2975,2685)(3126,-2038) \thicklines \put(3601,239){\line(
1,0){1800}} \put(5401,239){\line( 0,-1){1800}}
\put(5401,-1561){\line(-1, 0){1800}} \put(3601,-1561){\line(
0,1){1800}} \put(3601,239){\makebox(0,0){$\bullet$}}
\put(3601,-661){\makebox(0,0){$\bullet$}}
\put(3601,-1561){\makebox(0,0){$\bullet$}}
\put(5401,239){\makebox(0,0){$\bullet$}}
\put(5401,-661){\makebox(0,0){$\bullet$}}
\put(5401,-1561){\makebox(0,0){$\bullet$}}
\put(4500,-680){\makebox(0,0){$123$}}
\put(2980,-1861){\makebox(0,0){$1|2|3$}}
\put(2980,-699){\makebox(0,0){$1|3|2$}}
\put(2980,464){\makebox(0,0){$3|1|2$}}
\put(6000,-1861){\makebox(0,0){$2|1|3$}}
\put(6000,-699){\makebox(0,0){$2|3|1$}}
\put(6000,464){\makebox(0,0){$3|2|1$}}
\put(3040,-1260){\makebox(0,0){$1|23$}}
\put(4550,530){\makebox(0,0){$3|12$}}
\put(3040,-111){\makebox(0,0){$13|2$}}
\put(5960,-111){\makebox(0,0){$23|1$}}
\put(5960,-1260){\makebox(0,0){$2|13$}}
\put(4550,-1890){\makebox(0,0){$12|3$}}
\end{picture}\vspace{0.1in}

Figure 1: $P_{3}$ as a subdivision of $P_{2}\times I.$ \vspace{.1in}

\setlength{\unitlength}{0.007in} 
\begin{picture}
	(500,-500) \thicklines
	%%%%%%%%%%%%%%%%%%%%%%%%%%%%%%%%%%%%%%%%
	%Vertical edges
	\put(0,-120){\line( 0,-1){120}} \put(120,0){\line( 0,-1){360}}
	\put(180,-120){\line( 0,-1){120}} \put(240,0){\line( 0,-1){360}}
	\put(360,-120){\line( 0,-1){120}} \put(420,-120){\line(
		0,-1){120}} \put(480,-120){\line( 0,-1){120}}
	%%%%%%%%%%%%%%%%%%%%%%%%%%%%%%%%%%%%%%%%
	%Horizontal edges
	\put(120,0){\line( 1,0){120}} \put(0,-120){\line( 1,0){480}} \put
	(120,-150){\line( 1,0){60}} \put(180,-180){\line( 1,0){60}} \put
	(240,-150){\line( 1,0){120}} \put(420,-150){\line( 1,0){60}} \put
	(0,-180){\line( 1,0){120}} \put(360,-180){\line( 1,0){60}}
	\put(0,-240){\line( 1,0){480}} \put(120,-360){\line( 1,0){120}}
	%%%%%%%%%%%%%%%%%%%%%%%%%%%%%%%%%%%%%%%
	%Vertices
	\put(120,0){\makebox(0,0){$\bullet$}}
	\put(180,0){\makebox(0,0){$\bullet$}}
	\put(240,0){\makebox(0,0){$\bullet$}}
	\put(0,-120){\makebox(0,0){$\bullet$}}
	\put(120,-120){\makebox(0,0){$\bullet$}} \put(180,-120){\makebox
		(0,0){$\bullet$}} \put(240,-120){\makebox(0,0){$\bullet$}} \put
	(360,-120){\makebox(0,0){$\bullet$}}
	\put(420,-120){\makebox(0,0){$\bullet$}}
	\put(480,-120){\makebox(0,0){$\bullet$}} \put(120,-150){\makebox
		(0,0){$\bullet$}} \put(180,-150){\makebox(0,0){$\bullet$}} \put
	(240,-150){\makebox(0,0){$\bullet$}}
	\put(360,-150){\makebox(0,0){$\bullet$}}
	\put(0,-180){\makebox(0,0){$\bullet$}}
	\put(182.25,-180){\makebox(0,0){$\bullet$ }}
	\put(240,-180){\makebox(0,0){$\bullet$}} \put(420,-180){\makebox
		(0,0){$\bullet$}} \put(480,-180){\makebox(0,0){$\bullet$}} \put
	(0,-150){\makebox(0,0){$\bullet$}}
	\put(120,-240){\makebox(0,0){$\bullet$}}
	\put(360,-240){\makebox(0,0){$\bullet$}} \put(420,-240){\makebox
		(0,0){$\bullet$}} \put(480,-150){\makebox(0,0){$\bullet$}} \put
	(0,-240){\makebox(0,0){$\bullet$}}
	\put(120,-180){\makebox(0,0){$\bullet$}}
	\put(180,-240){\makebox(0,0){$\bullet$}} \put(240,-240){\makebox
		(0,0){$\bullet$}} \put(360,-180){\makebox(0,0){$\bullet$}} \put
	(420,-150){\makebox(0,0){$\bullet$}}
	\put(480,-240){\makebox(0,0){$\bullet$}}
	\put(120,-360){\makebox(0,0){$\bullet$}} \put(180,-360){\makebox
		(0,0){$\bullet$}} \put(240,-360){\makebox(0,0){$\bullet$}}
	%%%%%%%%%%%%%%%%%%%%%%%%%%%%%%%%%%%%%%
	%Labels
	%\put(275,-100){\makebox(0,0){$(1,1,1)$}}
	%\put(20,-260){\makebox(0,0){$(0,0,0)$ }}
	\put(55,-152){\makebox(0,0){$124|3$}}
	\put(55,-210){\makebox(0,0){$12|34$}}
	\put(150,-134){\makebox(0,0){$24|13$}}
	\put(150,-195){\makebox(0,0){$2|134$}}
	\put(177,-55){\makebox(0,0){$4|123$}}
	\put(177,-295){\makebox(0,0){$123|4$}}
	\put(210,-152){\makebox(0,0){$234|1$}}
	\put(210,-210){\makebox(0,0){$23|14$}}
	\put(300,-134){\makebox(0,0){$34|12$}}
	\put(300,-195){\makebox(0,0){$3|124$}}
	\put(390,-152){\makebox(0,0){$134|2$}}
	\put(390,-200){\makebox(0,0){$13|24$}}
	\put(450,-134){\makebox(0,0){$14|23$}}
	\put(450,-194){\makebox(0,0){$1|234$}}
\end{picture}
\vspace*{2.7in}

Figure 2: The facets of $P_{4}$ as a subdivision of $I^{3}$.\bigskip
\end{center}

%\vspace{-.05in}

If in addition, the cellular projection $p:P_{n+1}\rightarrow X$ preserves
maximal pairs of $\left( k_{1},k_{2}\right) $-subdivision cubes, i.e., for
every cell $e$ of $P_{n+1}$ we have 
\begin{equation*}
p\left( \mathbf{I}_{v,1}^{k_{1}},\mathbf{I}_{v,2}^{k_{2}}\right) _{e}=\left( 
\mathbf{I}_{p(v),1}^{k_{1}},\mathbf{I}_{p(v),2}^{k_{2}}\right) _{p(e)},
\end{equation*}
the components of the induced diagonal $\Delta_{X}$ on a cell $f\subseteq X$
form the set of product cells 
\begin{equation}
\Delta_{X}(f):=\bigcup_{\substack{ \left( e^{k_{1}},\,e^{k_{2}}\right)
\in\left( \mathbf{I}_{v,1}^{k_{1}},\mathbf{I}_{v,2}^{k_{2}}\right) _{\!f} 
\\ v\in\mathcal{V}_{f}}}\{e^{k_{1}}\times e^{k_{2}}\}.  \label{thepair}
\end{equation}

In particular, $p=\theta$ preserves maximal pairs of $(k_{1},k_{2})$%
-subdivision cubes and $\Delta_{K}(e)$ is given by setting $X=K_{n+2}$ (see (%
\ref{S-U}) below). Note that $\left( e^{k_{1}},e^{k_{2}}\right) \in\left( 
\mathbf{I}_{v,1}^{k_{1}},\mathbf{I}_{v,2}^{k_{2}}\right) _{\!X}$ implies $%
e^{k_{1}}\leq e^{k_{2}}.$ Thus $e^{k_{1}}\times e^{k_{2}}$ is a
\textquotedblleft matching pair\textquotedblright\ in the sense of MTTV (see
Definition \ref{mp}). Furthermore, since $f=p\left( e\right) $ for some 
\begin{equation*}
e=P_{n_{1}}\times\cdots\times P_{n_{s}} \text{ and }p\left( e\right)
=p(P_{n_{1}})\times\cdots\times p(P_{n_{s}}),
\end{equation*}
the diagonal $\Delta_{X}(f)$ is automatically the comultiplicative extension
of its values on the factors of $f$, i.e.,

\begin{equation*}
\Delta_{X}(f)=\Delta_{X}(p(P_{n_{1}}))\times\cdots\times\Delta
_{X}(p(P_{n_{s}})).
\end{equation*}

The subset $\mathcal{V}_{e}\subseteq S_{n}$ determines the components of $%
\Delta _{P}(e)$ in the following way: Let $\sigma =x_{1}|\cdots |x_{n}\in 
\mathcal{V}_{e}.$ Reading $\sigma $ from left-to-right and from
right-to-left, construct the partitions $\overleftarrow{\sigma }_{1}|\cdots |%
\overleftarrow{\sigma }_{p}$ and $\overrightarrow{\sigma }_{q}|\cdots |%
\overrightarrow{\sigma }_{1}$ of maximal decreasing subsets and form the 
\emph{Strong Complementary Pair }(SCP) 
\begin{equation*}
a_{\sigma }\times b_{\sigma }:=\overleftarrow{\sigma }_{1}|\cdots |%
\overleftarrow{\sigma }_{p}\times \overrightarrow{\sigma }_{q}|\cdots |%
\overrightarrow{\sigma }_{1}\in P(n)\times P(n).
\end{equation*}%
Then 
\begin{equation*}
\sigma =\max a_{\sigma }=\min b_{\sigma },\ \min \overleftarrow{\sigma }%
_{j}<\max \overleftarrow{\sigma }_{j+1}\text{ for all }j<p,\text{ and }
\end{equation*}%
\begin{equation*}
\min \overrightarrow{\sigma }_{i}<\max \overrightarrow{\sigma }_{i+1}\text{
for all }i<q.
\end{equation*}%
Thus, for $\sigma =2|1|3|5|4$ we have $\overleftarrow{\sigma }_{1}|%
\overleftarrow{\sigma }_{2}|\overleftarrow{\sigma }_{3}=21|3|54$ and $%
\overrightarrow{\sigma }_{3}|\overrightarrow{\sigma }_{2}|\overrightarrow{%
\sigma }_{1}=2|135|4$ so that $a_{\sigma }\times b_{\sigma }=21|3|54\times
2|135|4$.

Let $a=A_{1}|\cdots|A_{p}\in P(n)$. For $1\leq j<p$, let $%
M_{j}\subseteq(A_{j}\smallsetminus\{\min A_{j}\})$ such that $\min
M_{j}>\max A_{j+1}$ when $M_j\neq \varnothing$. Define the \emph{right-shift 
}$M_{j}$ \emph{action} 
\begin{equation*}
R_{M_{j}}(a):=\left\{ 
\begin{array}{cl}
A_{1}|\cdots|A_{j}\smallsetminus M_{j}|A_{j+1}\cup M_{j}|\cdots|A_{k}, & 
M_{j}\neq\varnothing \\ 
a, & M_{j}=\varnothing.%
\end{array}
\right.
\end{equation*}
Let \textbf{M}$:=(M_{1},M_{2} ,\ldots,M_{p-1})$ and denote the composition ${%
R}_{M_{p-1}}\cdots R_{M_{2} }R_{M_{1}}(a)$ by $R_{\mathbf{M}}\left(
a\right). $

Dually, let $b=B_{q}|\cdots|B_{1}\in P(n).$ For $1\leq i<q,$ let $%
N_{i}\subseteq(B_{i}\smallsetminus\{\min B_{i}\})$ such that $\min
N_{i}>\max B_{i+1}$ when $N_{i}\neq\varnothing.$ Define the \emph{left-shift 
}$N_{i} $\emph{\ action} 
\begin{equation*}
L_{N_{i}}(b):=\left\{ 
\begin{array}{cl}
B_{q}|\cdots|B_{i+1}\cup N_{i}|B_{i}\smallsetminus N_{i}|\cdots|B_{1}, & 
N_{i}\neq\varnothing \\ 
b, & N_{i}=\varnothing.%
\end{array}
\right.
\end{equation*}
Let \textbf{N}$:=(N_{1},N_{2} ,\ldots,N_{q-1})$ and denote the composition $%
L_{N_{q-1}}\cdots L_{N_{2}} L_{N_{1}}(b)$ by $L_{\mathbf{N}}\left( b\right)
. $

Now given $\sigma \in \mathcal{V}_e$ and the SCP $a_\sigma\times b_\sigma,$
the pair ${R}_{\mathbf{M}}(a_{\sigma})\times L_{\mathbf{N}}\left(
b_{\sigma}\right) $ is a \emph{Complementary Pair} (CP) \emph{on} $%
a_\sigma\times b_\sigma$. Define 
\begin{equation*}
A_{\sigma}\times B_{\sigma}:=\bigcup\limits_{\mathbf{M,N}}\left\{ {R}_{%
\mathbf{M}}(a_{\sigma})\times L_{\mathbf{N}}\left( b_{\sigma}\right) \right\}
\end{equation*}
and 
\begin{equation}
\Delta_{P}(e):=\bigcup_{\sigma\in\mathcal{V}_{e}}A_{\sigma}\times B_{\sigma}.
\label{Delta_P}
\end{equation}

\begin{example}
On the top dimensional cell $e^{2}\subseteq P_{3}$, $\Delta_{P}(e^{2})$ is
the union of 
\begin{equation*}
\hspace{-0.1in} 
\begin{array}{ll}
A_{1|2|3}\times B_{1|2|3}=\left\{ 1|2|3\times123\right\} , & A_{1|3|2}\times
B_{1|3|2}=\left\{ 1|32\times13|2\right\} , \\ 
A_{2|1|3}\times B_{2|1|3}=\left\{ 21|3\times2|13,\text{ }21|3\times
23|1\right\} , & A_{2|3|1}\times B_{2|3|1}=\{2|31\times23|1\}, \\ 
A_{3|1|2}\times B_{3|1|2}=\{31|2\times3|12,\text{ }1|32\times3|12\}, & 
A_{3|2|1}\times B_{3|2|1}=\{321\times3|2|1\}.%
\end{array}%
\end{equation*}
\end{example}

\begin{remark}
Note that the matrix representation of a CP introduced in \cite{SU}
conveniently organizes and systematizes the combinatorial calculation of $%
\Delta_P$. An SCP is represented by a \emph{step matrix} and a general CP is
represented by a \emph{derived matrix}, given by left-shift and down-shift
actions on a step matrix.
\end{remark}

When $X=P_{n+1}$, Formulas (\ref{thepair}) and (\ref{Delta_P}) are
equivalent. The maximal $\left( k_{1},k_{2}\right) $-subdivision pair with
respect to a vertex $\sigma$ of $P_{n+1}$ is\ 
\begin{equation}
\left( \mathbf{I}_{\sigma,1}^{k_{1}},\mathbf{I}_{\sigma,2}^{k_{2}}\right)
=\left( \bigcup_{e_{1}\in A_{\sigma}}e_{1},\bigcup_{e_{2}\in
B_{\sigma}}e_{2}\right) .  \label{maxpair1}
\end{equation}

\begin{definition}
A positive dimensional face $e$ of $P_{n}$ is \textbf{non-degenerate }if $%
|\theta(e)|=|e|$. A positive dimensional partition $a=A_{1}|\cdots|A_{p}\in
P(n)$ is \textbf{degenerate }if for some $j$ and some $k>0$, there exist $%
x,z\in A_{j}$ and $y\in A_{j+k}$ such that $x<y<z;$ otherwise $a$ is \textbf{%
non-degenerate}. A CP $\alpha\times\beta$ is \textbf{non-degenerate} if $%
\alpha$ and $\beta$ are non-degenerate.
\end{definition}

\noindent Define $\Delta_{K}(K_{n+1})=\Delta_{K}(\theta(P_{n})):=(\theta
\times\theta)\Delta_{P}(P_{n});$ then%
\begin{equation}
\Delta_{K}(e^{n-1})=\bigcup_{\substack{ \text{non-degenerate CPs }  \\ %
\alpha\times\beta\in A_{\sigma}\times B_{\sigma}  \\ \sigma\in S_{n}}}%
\{\theta\left( \alpha\right) \times\theta\left( \beta\right) \}.  \label{S-U}
\end{equation}

\section{Agreement of $\Delta_K$ and $\Delta^{\prime }_{K}$}

\begin{definition}
\label{mp} A pair of faces $a\times b\subseteq K_{n+1}\times K_{n+1}$ is a 
\textbf{Matching Pair }(MP) if $a\leq b$ and $\left\vert a\right\vert
+\left\vert b\right\vert =n-1.$
\end{definition}

\noindent The \textquotedblleft magical formula\textquotedblright\ derived
in \cite{MS} and \cite{MTTV} is 
\begin{equation}
\Delta_{K}^{\prime}\left( e^{n-1}\right) =\bigcup_{\substack{ \text{MPs of
faces}  \\ a\times b\subseteq K_{n+1}\times K_{n+1}}}\{a\times b\}.
\label{magical}
\end{equation}

Tonks' projection $\theta$ sends every non-degenerate CP to an MP. The
converse is our main result: \emph{Every MP is the image of a unique
non-degenerate CP under }$\theta $\emph{; thus }$\Delta_{K}^{\prime}$\emph{\
and }$\Delta_{K}$\emph{\ agree}. Our proof of this fact views $P_{n}$ as a
subdivision of $K_{n+1}$.

\begin{definition}
Let $0\leq k<n.$ An \textbf{associahedral }$k$-\textbf{cell of }$P_{n}$ is a 
$k$-cell of $K_{n+1}$. A \textbf{subdivision }$k$-\textbf{cell of} $P_{n}$
is a $k$-cell of some associahedral $k$-cell of $P_{n}$. The \textbf{maximal
(}respt.\textbf{\ minimal)} \textbf{subdivision }$k$\textbf{-cell} of an
associahedral $k$-cell $a,$ denoted by $a_{\max}$ (respt. $a_{\min}$),
satisfies $\max a_{\max}=\max a$ (respt. $\min a_{\min}=\min a).$ A \textbf{%
non-degenerate vertex of} $P_{n}$ is an associahedral vertex.
\end{definition}

\noindent Thus a subdivision $k$-cell of $P_{n}$ has the form $%
A_{1}|\cdots|A_{n-k}.$ In fact, a vertex $v$ of $P_{n}$ is associahedral if
and only if the $\left( n-q\right) $-cell $\overrightarrow{v}_{q}|\cdots |%
\overrightarrow{v}_{1}$ is non-degenerate, in which case $\min \overset{%
\rightarrow}{v}_{q}>\cdots>\min\overset{\rightarrow}{v}_{1}.$ If $k>0,$ an
associahedral $k$-cell $a$ is a subdivision $k$-cell if and only if $%
a=a_{\min}.$

\begin{proposition}
\label{minimal}If $a$ is an associahedral $k$-cell and $u$ is a subdivision $%
k$-cell of $a,$ then

\begin{enumerate}
\item[\textit{(i)}] $a_{\min}$ is non-degenerate.

\item[\textit{(ii)}] If $u\neq a_{\min},$ then $u$ is degenerate and $u=L_{%
\mathbf{N}}\left( a_{\min}\right) $ for some $\mathbf{N.}$

\item[\textit{(iii)}] $a_{\min}=R_{\mathbf{M}}\left( a_{\max}\right) $ for
some $\mathbf{M}$.
\end{enumerate}
\end{proposition}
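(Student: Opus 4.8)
The plan is to prove the three parts in the order given, since (ii) and (iii) naturally build on the structural description of $a_{\min}$ established in (i). For part (i), I would start from the characterization stated just before the proposition: an associahedral $k$-cell has the form $a = A_1|\cdots|A_{n-k}$, and $a_{\min}$ is the unique such subdivision cell with $\min(a_{\min})_{\min} = \min a$. I would write out $\min a$ explicitly as a permutation $\sigma = x_1|\cdots|x_n$ (the weak-order-minimal vertex of the face $a$), decompose it into maximal decreasing runs reading right-to-left to get $\overrightarrow{\sigma}_q|\cdots|\overrightarrow{\sigma}_1$, and invoke the stated fact that $\sigma$ is associahedral iff this $(n-q)$-cell is non-degenerate, with $\min \overrightarrow{\sigma}_q > \cdots > \min \overrightarrow{\sigma}_1$. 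The key point is that $a_{\min}$ is obtained from $\sigma$ by the minimal possible coarsening compatible with being a subdivision cell of $a$; I would argue that this coarsening merges only elements that sit in a single block of the partition $a$ without creating any ``$x<y<z$ with $x,z$ in one block and $y$ in a later block'' pattern, because any such violation in $a_{\min}$ would already have to be visible in $a$ itself (contradicting that $a$, being associahedral, is non-degenerate — a $k$-cell of $K_{n+1}$).

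For part (ii), I would fix a subdivision $k$-cell $u$ of $a$ with $u \neq a_{\min}$ and compare $u$ with $a_{\min}$ block by block. Both are partitions of $\mathfrak{n}$ into $n-k$ blocks refining nothing finer than the shared structure, so $u$ is obtained from $a_{\min}$ by moving elements between blocks; the constraint ``$u$ is a subdivision cell of $a$'' pins down which moves are allowed. I would show these moves are exactly left-shift actions: each element (or set of elements) that $u$ places in an earlier block than $a_{\min}$ does can be realized by some $L_{N_i}$, and the compatibility conditions $\min N_i > \max B_{i+1}$, $N_i \subseteq B_i \setminus \{\min B_i\}$ fall out of the requirement that $u$ remains a face of the same associahedral cell $a$ and has the same minimal vertex $\min u \geq \min a = \min a_{\min}$. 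Degeneracy of $u$ then follows because any genuine left-shift past a block witnesses precisely an $x<y<z$ pattern of the forbidden type — this is essentially the contrapositive of the ``$u$ associahedral $\Rightarrow$ $u = u_{\min}$'' fact combined with $u \neq a_{\min}$ forcing $u$ to not be its own minimum.

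For part (iii), I would run the dual argument on $a_{\max}$: by definition $\max(a_{\max}) = \max a$, so its associated permutation read left-to-right into maximal decreasing runs $\overleftarrow{\tau}_1|\cdots|\overleftarrow{\tau}_p$ governs its structure, and I would show the passage from $a_{\max}$ to $a_{\min}$ is a composite of right-shifts $R_{M_j}$, using that both are subdivision $k$-cells of the same associahedral cell $a$ and that $a_{\min}$ is characterized as the non-degenerate one. Concretely, moving from the ``max'' normalization to the ``min'' normalization pushes elements rightward into later blocks exactly as far as non-degeneracy permits, and each such push is an admissible $R_{M_j}$ with $M_j \subseteq A_j \setminus \{\min A_j\}$, $\min M_j > \max A_{j+1}$.

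I expect the main obstacle to be part (ii): cleanly proving that \emph{every} subdivision $k$-cell $u \neq a_{\min}$ is reached from $a_{\min}$ by a \emph{single well-formed} sequence $\mathbf{N} = (N_1,\ldots,N_{q-1})$ of left-shifts, rather than just \emph{some} sequence of block-moves. The subtlety is bookkeeping: after an $L_{N_i}$ the block $B_{i+1}$ has absorbed $N_i$, so subsequent choices $N_{i+1} \subseteq B_{i+1} \cup N_i$ must be analyzed in the shifted partition, and one must verify no admissible move is missed and none over-counts. I would handle this by inducting on $q$ (the number of blocks) or equivalently on the ``total displacement'' $\sum |\,$elements of $u$ lying left of their $a_{\min}$-position$\,|$, peeling off the leftmost nontrivial shift first; the base case $u = a_{\min}$ is trivial and the inductive step reduces the displacement while staying within the subdivision cells of $a$. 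The compatibility inequalities should then emerge forced rather than assumed, which is what makes the decomposition unique.
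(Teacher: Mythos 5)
Your outline matches the paper's own proof in approach: the paper likewise disposes of (i) and (ii) by appealing directly to the construction of $P_{n}$ as a subdivision of $K_{n+1}$, and proves (iii) by the same shift mechanism you describe, constructing block by block the sequence $\mathbf{N}$ with $a_{\max}=L_{\mathbf{N}}\left(a_{\min}\right)$ and taking $\mathbf{M}$ to be its reversal, which is the mirror image of your direct right-shift construction. The one repair needed is in your part (i): the non-degeneracy criterion attaches to the cell $\overrightarrow{\sigma}_{q}|\cdots|\overrightarrow{\sigma}_{1}$ (generally of dimension larger than $k$), so you should conclude by observing that $a_{\min}$ refines this non-degenerate cell by cuts at ascents of $\sigma=\min a$, which cannot create an $x<y<z$ pattern, rather than by appealing to ``non-degeneracy of $a$ itself'' --- $a$ is a face of $K_{n+1}$, not a partition, so that phrase has no meaning and the intended deduction would be circular.
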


\begin{proof}
Set $p=n-k$ and consider an associahedral $k$-cell $a$ of $P_{n}$. If $a$ is
also a subdivision $k$-cell, then $a=a_{\min}=\theta\left( a\right) $ is
non-degenerate and $\mathbf{M=\varnothing}.$ Otherwise, conclusions (i) and
(ii) follow from the construction of $P_{n}$ as a subdivision of $K_{n+1}.$
For part (iii), given a subdivision $k$-cell $u=A_{1}|\cdots|A_{p}$ of $a,$
let 
\begin{equation*}
N_{p}:=\{x\in A_{p}\smallsetminus\left\{ \min A_{p}\right\} :x>\max
A_{p-1}\}.
\end{equation*}
Inductively, if $1<i<p$ and $N_{i+1}$ has been constructed, let $%
A_{i}^{\prime}:=A_{i}\cup N_{i+1}$ and let 
\begin{equation*}
N_{i}:=A_{i}^{\prime }\smallsetminus\left\{ x\in
A_{i}^{\prime}\smallsetminus\left\{ \min A_{i}^{\prime}\right\} :x>\max
A_{i-1}\right\} .
\end{equation*}
Then $a_{\max}=L_{(N_{p},\ldots,N_{2})}\left( a_{\min}\right) $. Set $%
\mathbf{M}=(M_{1},...,M_{p-1}):=(N_{2},...,N_{p});$ then $a_{\min}=R_{%
\mathbf{M}}\left( a_{\max}\right) .$
\end{proof}

\begin{example}
\label{facet}Consider the associahedral facet $a=1|234\cup13|24\cup
14|23\cup134|2;$ then $a_{\min}=1|234$ is non-degenerate, $13|24=L_{\left\{
3\right\} }\left( a_{\min}\right) ,$ $14|23=L_{\left\{ 4\right\} }\left(
a_{\min}\right) ,$ and $a_{\max}=134|2=L_{\left\{ 3,4\right\} }\left(
a_{\min}\right) .$ Furthermore, $a_{\min}=1|234=R_{\left\{ 3,4\right\}
}\left( 134|2\right) .$
\end{example}

\begin{proposition}
\label{left-action}Let $v$ be an associahedral vertex of $P_{n}$ and let $a=%
\overrightarrow{v}_{q}|\cdots|\overrightarrow{v}_{1}.$ If $b$ is a
non-degenerate cell of $P_{n}$ such that $|b|=|a|$ and $\min a\leq\min b,$
then $b=L_{\mathbf{N}}(a)$ for some $\mathbf{N}.$
\end{proposition}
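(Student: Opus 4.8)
The plan is to argue by induction on $q$, exploiting the recursive structure of $P_n$ as a subdivision of $P_{n-1}\times I$ together with Proposition~\ref{minimal}. Write $a=\overrightarrow{v}_{q}|\cdots|\overrightarrow{v}_{1}$ and recall from the discussion preceding Proposition~\ref{minimal} that, since $v$ is associahedral, $a$ is non-degenerate and $\min\overrightarrow{v}_{q}>\cdots>\min\overrightarrow{v}_{1}$; in particular $a=a_{\min}$ is the minimal subdivision cell of the associahedral cell it spans. Write $b=B_{q}|\cdots|B_{1}$ (after padding with empty blocks, $b$ has at most $q$ blocks because $|b|=|a|=n-q$, i.e.\ $b$ has exactly $q$ blocks). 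The hypothesis $\min a\le\min b$ means there is an oriented edge-path in $P_n$ from $\max a$ to $\min b$; equivalently, in terms of the underlying partitions, $B_q|\cdots|B_1$ is obtained from $\overrightarrow{v}_q|\cdots|\overrightarrow{v}_1$ by a sequence of elementary moves each of which shifts an element into a lower-indexed (more-to-the-left-of-the-bar-reading) block — which is exactly what a composite left-shift $L_{\mathbf N}$ does. So the content is to show that the passage $a\leadsto b$ can be organized as such a composite, subject to the block-by-block admissibility constraints $N_i\subseteq B_i\setminus\{\min B_i\}$ (after accumulation) and $\min N_i>\max B_{i+1}$.

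First I would dispose of the base case $q=1$: then $a=\overrightarrow{v}_1=\mathfrak n$ is the single-block top cell, $b$ has one block, so $b=a$ and $\mathbf N=\varnothing$. For the inductive step, consider the last (lowest-indexed) block $\overrightarrow{v}_1$ of $a$ and the last block $B_1$ of $b$. The key structural fact, coming from the cube-subdivision coordinates table, is that $\overrightarrow{v}_1$ is a \emph{maximal decreasing suffix} read right-to-left, hence $\min\overrightarrow{v}_1$ is small; and since $\min a\le\min b$ forces the matching "suffix" behavior, $B_1\subseteq\overrightarrow{v}_1$ — the elements that $b$ keeps in its last block form a sub(interval) of $\overrightarrow{v}_1$, and the complement $N_1:=\overrightarrow{v}_1\setminus B_1$ consists of elements that must be shifted left. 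I would verify $N_1\subseteq\overrightarrow{v}_1\setminus\{\min\overrightarrow{v}_1\}$ (the minimum of the last block is never shifted, because $\min a\le\min b$ keeps the global minimum in place) and $\min N_1>\max\overrightarrow{v}_2\ge\max B_2$, so that $L_{N_1}(a)$ is defined. Then $L_{N_1}(a)=\overrightarrow{v}_q|\cdots|\overrightarrow{v}_3|\overrightarrow{v}_2\cup N_1|B_1$, and I would check that the first $q-1$ blocks of this, namely $a':=\overrightarrow{v}_q|\cdots|\overrightarrow{v}_3|(\overrightarrow{v}_2\cup N_1)$, is again of the form $\overrightarrow{w}_{q-1}|\cdots|\overrightarrow{w}_1$ for the associahedral vertex $w$ obtained from $v$ by the same operation (this uses non-degeneracy of $a$ to see that $\overrightarrow{v}_2\cup N_1$ is still a decreasing run with the right min-inequalities), while $b':=B_q|\cdots|B_2$ is non-degenerate with $|b'|=|a'|$ and $\min a'\le\min b'$. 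Applying the inductive hypothesis to $a',b'$ yields $b'=L_{\mathbf N'}(a')$, and concatenating $B_1$ back on gives $b=L_{(\mathbf N',N_1)}(a)$, i.e.\ $\mathbf N=(N_1,N_2',\ldots,N_{q-1}')$ up to reindexing — here one must observe that the admissibility of $\mathbf N'$ on the blocks of $a'$ (which already contain $N_1$ accumulated into block $2$) is precisely the accumulated-block condition "choose $N_2\subset B_2\cup N_1$" in the definition of $L_{\mathbf N}$, so the pieces assemble into a single legal composite left-shift.

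The main obstacle I anticipate is the second sentence of the inductive step: proving rigorously that the "last block of $b$ is contained in the last block of $a$" and, more generally, that peeling off one block at a time is compatible with the hypothesis $\min a\le\min b$ rather than being blocked by it. This is where one genuinely needs the explicit realization of $P_n$ as a subdivision of the cube (the table relating faces of $a\times I(a)$ to partitions in $P(n)$) and the characterization — stated just before Proposition~\ref{minimal} — that $v$ is associahedral iff $\overrightarrow{v}_q|\cdots|\overrightarrow{v}_1$ is non-degenerate with strictly decreasing block-minima. Concretely, I would show: (a) $\min b=\min a$ is impossible to violate at the global minimum, so the element $\min\overrightarrow{v}_1$ stays put; (b) the weak-order relation $\max a\le\min b$ forces every element of $B_1$ to already lie to the right of every element it must be "below," which by non-degeneracy of $a$ pins $B_1$ inside $\overrightarrow{v}_1$; and (c) therefore the residual data $(a',b')$ lives in a strictly smaller $P$-type problem. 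Everything else — the inequalities $\min N_i>\max B_{i+1}$, the accumulation bookkeeping — is routine once (a)–(c) are in hand, and it mirrors the construction of $\mathbf M$ in the proof of Proposition~\ref{minimal}(iii).
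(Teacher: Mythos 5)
Your strategy (induction on the number of blocks $q$, peeling off the last block) is genuinely different from the paper's proof, which instead chooses an oriented edge path of p-o increasing transpositions from $\min a$ to $\min b$ preserving the inequalities among the block minima, forms the cell $\overrightarrow{v_i}_{q}|\cdots|\overrightarrow{v_i}_{1}$ at each intermediate vertex, and reads off $\mathbf{N}$ from the subsequence of $k$-cells, each of which differs from its predecessor by a single-element left shift. Your route could in principle work, but as written it has a genuine gap precisely where you say it does: the claims that $B_{1}\subseteq\overrightarrow{v}_{1}$, that $\min\overrightarrow{v}_{1}\in B_{1}$, and that $\min\left(\overrightarrow{v}_{1}\smallsetminus B_{1}\right)>\max\overrightarrow{v}_{2}$ are exactly the content of the proposition at the first stage, and you offer only a heuristic for them. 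Note that if the proposition is already known, $B_{1}\subseteq\overrightarrow{v}_{1}$ is immediate (left shifts only remove elements from the last block), so proving it independently requires a real argument combining the inversion-set characterization of $\min a\leq\min b$ with the non-degeneracy of \emph{both} $a$ and $b$ and the associahedrality of $v$; none of that is carried out.

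Two concrete errors compound the gap. First, you twice restate the hypothesis as ``$\max a\leq\min b$.'' The hypothesis is $\min a\leq\min b$, a relation between the \emph{vertices} $\min a=v$ and $\min b$; the relation $\max a\leq\min b$ between the maximal vertex of the cell $a$ and $\min b$ is strictly stronger and would exclude the case $b=a$ (with $\mathbf{N}=\varnothing$), which must be allowed. Second, the inequality ``$\min N_{1}>\max\overrightarrow{v}_{2}\geq\max B_{2}$'' is false in its second step: left shifts push large elements \emph{into} $B_{2}$, so generically $\max B_{2}>\max\overrightarrow{v}_{2}$ (e.g.\ $v=3|1|2|4$ in $P_{4}$, $a=3|124$, $b=34|12$, where $\max B_{2}=4>3=\max\overrightarrow{v}_{2}$). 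The inequality actually needed for $L_{N_{1}}(a)$ to be defined is $\min N_{1}>\max\overrightarrow{v}_{2}$, and that too is left unjustified. Until the peeling lemma is proved, the induction has no base to stand on; I would either supply that lemma in full or adopt the paper's edge-path argument.
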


\begin{proof}
Let $a=A_{n-k}|\cdots|A_{1}$ and let $r_{i}=\min A_{i}.$ Since $v$ is
associahedral, it follows that $r_{n-k}>\cdots>r_{1}$. Since $\min a\leq\min
b,$ there is a product of p-o increasing transpositions $\tau:=\tau_{t}%
\cdots\tau_{2}\tau _{1}\ $such that $\tau(\min a)=\min b$ and $\tau_{i}$
preserves the inequality $r_{j}>r_{j-1}$ for $1\leq i\leq t$ and $1\leq
j\leq n-k.$ Define $\tau _{0}:=\mathbf{Id}$ and consider the vertex $%
v_{i}:=\tau_{t_{i}}\cdots \tau_{1}\tau _{0}(\min a)$ for each $1\leq
t_{i}\leq t.$ For each $i,$ there is the (possibly degenerate) cell $u_{i}:=%
\overrightarrow{v_{i}}_{q}|\cdots |\overrightarrow{v_{i}}_{1},$ where $%
q\in\left\{ n-k,n-k+1\right\} .$ Thus there is the sequence $\{
a=u_{0},u_{1},\ldots, u_{t}=b\} $ and its subsequence of $k$-cells $\left\{
a=u_{i_{0}},u_{i_{1}},...,u_{i_{s-1}},u_{i_{s}}=b\right\}.$ By construction,
for $1\leq j\leq s$, there exists $n_{j}\in\underline{n}$ such that $%
u_{i_{j}}=L_{\{n_{j}\}}(u_{i_{j-1}}).$ For $1\leq i<s,$ let 
\begin{equation*}
N_{i}=\{n_{j}\in A_{i}\cup N_{1}\cup\cdots\cup
N_{i-1}:u_{i_{j}}=L_{\{n_{j}\}}(u_{i_{j-1}})\text{ for some }j\}
\end{equation*}
and form the sequence of sets $\mathbf{N}:=(N_{s-1},...,N_{1}).$ Since $b$
is non-degenerate, the action $L_{\mathbf{N}}(a)$ is defined and $L_{\mathbf{%
N}}(a)=b$.
\end{proof}

Identify a $k$-face $F\subset K_{n+1}$ with its corresponding associahedral $%
k$-cell of $P_{n}$ and label $F$ with its minimal subdivision $k$-cell $%
F_{\min};$ then $\theta\left( F_{\min}\right) =F$ (compare Figures 2 and 3).

\begin{example}
\label{transpositions}Consider the associahedral vertex $v=5|3|1|2|4|6,$ the
associated $3$-cell $a=\overrightarrow{v}_{3}|\overrightarrow{v}_{2}|%
\overrightarrow{v}_{1}=5|3|1246$ and the non-degenerate $3$-cell $b=56|34|12.
$ Then 
\begin{equation*}
\min a=5|3|1|2|4|6<5|6|3|4|1|2=\min b,
\end{equation*}%
and there is the product of p-o increasing transpositions 
\begin{equation*}
\tau =\tau _{6}\cdots \tau _{1}:=\left( 3,6\right) \left( 4,6\right) \left(
1,6\right) \left( 2,6\right) (1,4)(2,4)
\end{equation*}%
such that 
\begin{equation*}
\{v_{1}=\tau _{1}(\min a)=5|3|1|4|2|6,v_{2}=\tau _{2}(v_{1})=5|3|4|1|2|6,\
v_{3}=\tau _{3}(v_{2})=5|3|4|1|6|2,\ 
\end{equation*}%
\begin{equation*}
v_{4}=\tau _{4}(v_{3})=5|3|4|6|1|2,\ v_{5}=\tau
_{5}(v_{4})=5|3|6|4|1|2,v_{6}=\tau _{6}(v_{5})=5|6|3|4|1|2\}.
\end{equation*}%
There is the sequence of cells 
\begin{equation*}
\{u_{0}=5|3|1246,u_{1}=5|3|14|26,u_{2}=5|34|126,u_{3}=5|34|16|2,
\end{equation*}%
\begin{equation*}
u_{4}=5|346|12,u_{5}=5|36|4|12,u_{6}=56|34|12\}
\end{equation*}%
and its subsequence of $3$-cells 
\begin{equation*}
\{u_{0}=5|3|1246,u_{2}=5|34|126,u_{4}=5|346|12,u_{6}=56|34|12\}.
\end{equation*}%
Thus 
\begin{equation*}
N_{1}=\{n_{j}\in A_{1}:u_{i_{j}}=L_{\left\{ n_{j}\right\} }\left(
u_{i_{j-1}}\right) \text{ for some }j\}=\left\{ 4,6\right\} ,\text{ and}
\end{equation*}
\begin{equation*}
N_{2}=\{n_{j}\in A_{2}\cup N_{1}:u_{i_{j}}=L_{\left\{ n_{j}\right\} }\left(
u_{i_{j-1}}\right) \text{ for some }j\}=\left\{ 6\right\} .
\end{equation*}%
Conclude that $56|34|12=L_{(\{4,6\},\left\{ 6\right\} )}(5|3|1246).$
\end{example}

\medskip

\begin{center}
\setlength{\unitlength}{0.007in}%
\begin{picture}
	(480,0) \thicklines
	%%%%%%%%%%%%%%%%%%%%%%%%%%%%%%%%%%%%%%%%
	%Vertical edges
	\put(0,-120){\line( 0,-1){120}} \put(120,0){\line( 0,-1){360}}
	\put (180,-120){\line( 0,-1){120}} \put(240,0){\line( 0,-1){360}}
	\put (360,-120){\line( 0,-1){120}} \put(480,-120){\line(
		0,-1){120}}
	%%%%%%%%%%%%%%%%%%%%%%%%%%%%%%%%%%%%%%%%
	%Horizontal edges
	\put(120,0){\line( 1,0){120}} \put(0,-120){\line( 1,0){480}} \put
	(180,-180){\line( 1,0){60}} \put(240,-150){\line( 1,0){120}} \put
	(0,-240){\line( 1,0){480}} \put(120,-360){\line( 1,0){120}}
	%%%%%%%%%%%%%%%%%%%%%%%%%%%%%%%%%%%%%%%
	%Vertices
	\put(120,0){\makebox(0,0){$\bullet$}}
	\put(240,0){\makebox(0,0){$\bullet$}}
	\put(0,-120){\makebox(0,0){$\bullet$}}
	\put(120,-120){\makebox(0,0){$\bullet$}}
	\put(180,-120){\makebox(0,0){$\bullet$}} \put(240,-120){\makebox
		(0,0){$\bullet$}} \put(360,-120){\makebox(0,0){$\bullet$}} \put
	(480,-120){\makebox(0,0){$\bullet$}}
	\put(240,-150){\makebox(0,0){$\bullet$}}
	\put(360,-150){\makebox(0,0){$\bullet$}} \put(180,-180){\makebox
		(0,0){$\bullet$}} \put(240,-180){\makebox(0,0){$\bullet$}} \put
	(120,-240){\makebox(0,0){$\bullet$}}
	\put(360,-240){\makebox(0,0){$\bullet$}}
	\put(0,-240){\makebox(0,0){$\bullet$}}
	\put(180,-240){\makebox(0,0){$\bullet$}}
	\put(240,-240){\makebox(0,0){$\bullet$}} \put(480,-240){\makebox
		(0,0){$\bullet$}} \put(120,-360){\makebox(0,0){$\bullet$}} \put
	(240,-360){\makebox(0,0){$\bullet$}}
	%%%%%%%%%%%%%%%%%%%%%%%%%%%%%%%%%%%%%%
	%Labels
	%\put(280,-100){\makebox(0,0){$(1,1,1)$}}
	%\put(20,-260){\makebox(0,0){$(0,0,0)$ }}
	\put(177,-55){\makebox(0,0){$4|123$}} \put(177,-300){\makebox
		(0,0){$123|4$}} \put(55,-180) {\makebox(0,0){$12|34$}}
	\put (300,-136){\makebox(0,0){$34|12$}}
	\put(150,-180){\makebox(0,0){$2|134$}} \put(300,-195)
	{\makebox(0,0){$3|124$}} \put(210,-152){\makebox
		(0,0){$234|1$}} \put(210,-210){\makebox(0,0){$23|41$}}
	\put (420,-180){\makebox(0,0){$1|234$}}
\end{picture}\vspace{2.6in}

Figure 3: The facets of $K_{5}$ labeled with their minimal subdivision $2$%
-cells in $P_4$.
\end{center}

\medskip

\begin{theorem}
\label{matching}Let $F\times G\subset K_{n+1}\times K_{n+1}$ be an MP. Then $%
F_{\min}\times G_{\min}\subset P_{n}\times P_{n}$ is a CP and $F\times
G=\theta\left( F_{\min}\right) \times\theta\left( G_{\min}\right) .$
Consequently, the diagonals $\Delta_{K}^{\prime}$ and $\Delta_{K}$ agree.
\end{theorem}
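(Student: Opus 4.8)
The plan is to establish the two directions separately, since the forward direction (Tonks' projection sends non-degenerate CPs to MPs) is already asserted in the text and the substance lies in the converse. Given an MP $F\times G$, the goal is to produce a non-degenerate CP whose $\theta$-image is $F\times G$, show it must be $F_{\min}\times G_{\min}$, and argue uniqueness. First I would record the dimension and order constraints: $|F|+|G|=n-1$ and $F\leq G$ in the Tamari order. Passing to subdivision cells, $|F_{\min}|=|F|$, $|G_{\min}|=|G|$, $\theta(F_{\min})=F$, and $\theta(G_{\min})=G$ by the labelling convention fixed just before the statement. So it remains to prove that $F_{\min}\times G_{\min}$ is an element of some $A_\sigma\times B_\sigma$ — i.e., that it arises from a strong complementary pair by a right-shift action on the left factor and a left-shift action on the right factor — and that it is non-degenerate.

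The key step is to locate the vertex $\sigma$. The natural candidate is $\sigma:=\max F=\min(\text{something})$; more precisely, since $F\leq G$ in the Tamari order there is an oriented edge-path in $P_n$ from $\max F$ to $\min G$, and I would set $\sigma$ to be an associahedral vertex on the $K_{n+1}$ side realizing the ``meeting point''. Writing $a=\overrightarrow{\sigma}_q|\cdots|\overrightarrow{\sigma}_1$ for the maximal decreasing partition read right-to-left and $b=\overleftarrow{\sigma}_1|\cdots|\overleftarrow{\sigma}_p$ read left-to-right gives the SCP $a_\sigma\times b_\sigma$ with $\sigma=\max a_\sigma=\min b_\sigma$. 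I would then invoke Proposition \ref{left-action}: since $G_{\min}$ is non-degenerate with $|G_{\min}|=|a_\sigma|$ (after checking the dimension bookkeeping — this is where $|F|+|G|=n-1$ enters, matching $k_1+k_2=|e|$ in \eqref{thepair}) and $\min a_\sigma\leq\min G_{\min}$, we get $G_{\min}=L_{\mathbf N}(a_\sigma)$ for suitable $\mathbf N$. Dually, applying Proposition \ref{minimal}(iii) (and the right-shift half of Proposition \ref{left-action}, read through the symmetry reversing all partitions) yields $F_{\min}=R_{\mathbf M}(b_\sigma^{\mathrm{op}})$, i.e. $F_{\min}$ is obtained from the left factor of the SCP by a right-shift action. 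Hence $F_{\min}\times G_{\min}\in A_\sigma\times B_\sigma$, so it is a CP; and it is non-degenerate because $F_{\min}=\theta(F_{\min})$ and $G_{\min}=\theta(G_{\min})$ are images of associahedral cells, which by definition are non-degenerate. Then $\theta(F_{\min})\times\theta(G_{\min})=F\times G$ shows $F\times G$ is a component of $\Delta_K$ via \eqref{S-U}.

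For the agreement of the two diagonals, I would close by a counting/bijection argument: $\theta\times\theta$ carries the set of non-degenerate CPs onto the set of MPs (forward direction, granted; converse, just proved), and the correspondence $\alpha\times\beta\mapsto \theta(\alpha)\times\theta(\beta)$ is injective on non-degenerate CPs because a non-degenerate partition $a$ is recovered from $\theta(a)$ as the unique minimal subdivision cell $\theta(a)_{\min}$ — this is exactly the content of ``$a=a_{\min}$ iff $a$ is a subdivision cell'' together with Proposition \ref{minimal}(i). Therefore \eqref{S-U} and \eqref{magical} index the same set of product cells, so $\Delta_K=\Delta_K'$.

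I expect the main obstacle to be the bookkeeping that identifies the correct $\sigma$ and verifies that the right-shift data $\mathbf M$ produced from $F_{\min}$ and the left-shift data $\mathbf N$ produced from $G_{\min}$ are \emph{compatible}, i.e., that they both refer to the same SCP $a_\sigma\times b_\sigma$ rather than two unrelated ones. Concretely, one must check that $\max F_{\min}$ and $\min G_{\min}$ are ``glued'' by the SCP structure at $\sigma$ — that the step matrix $C_\sigma$ simultaneously admits the required right-shift and down-shift actions — and this is precisely where the hypothesis $F\leq G$ (not merely $|F|+|G|=n-1$) is indispensable. Handling the degenerate intermediate cells $u_i$ in the edge-path, as in the proof of Proposition \ref{left-action} and Example \ref{transpositions}, will require some care but is routine once the SCP is correctly pinned down.
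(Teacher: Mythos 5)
Your proposal follows essentially the same route as the paper: take $\sigma=\max F$, identify $F_{\max}$ with the left factor $\overleftarrow{\sigma}_{1}|\cdots|\overleftarrow{\sigma}_{p}$ of the SCP at $\sigma$ so that $F_{\min}=R_{\mathbf{M}}(F_{\max})$ by Proposition \ref{minimal}(iii), and use $F\leq G$ to get $\min(\overrightarrow{\sigma}_{q}|\cdots|\overrightarrow{\sigma}_{1})=\sigma\leq\min G_{\min}$ so that Proposition \ref{left-action} yields $G_{\min}=L_{\mathbf{N}}(\overrightarrow{\sigma}_{q}|\cdots|\overrightarrow{\sigma}_{1})$. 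Apart from swapping the labels $a_{\sigma}$ and $b_{\sigma}$ and worrying unnecessarily about a compatibility condition between $\mathbf{M}$ and $\mathbf{N}$ (the definition of a CP allows these shifts to be chosen independently on the two factors), this is the paper's argument.
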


\begin{proof}
Let $\sigma=\max F;$ then $F_{\max}=\overleftarrow{\sigma}_{1}|\cdots |%
\overleftarrow{\sigma}_{p}$ for some $p$ and $F_{\min}=R_{\mathbf{M}}\left(
F_{\max}\right) $ for some $\mathbf{M}$ by Proposition \ref{minimal}. Let $%
\beta=\overrightarrow{\sigma}_{q}|\cdots|\overrightarrow{\sigma}_{1}$ and
consider the SCP $F_{\max}\times\beta.$ Since $\sigma$ is an associahedral
vertex and $\min\beta\leq\min G_{\min}$ the hypotheses of Proposition \ref%
{left-action} is satisfied; hence $G_{\min}=L_{\mathbf{N}}(\beta)$ for some $%
\mathbf{N.}$ Therefore $F_{\min}\times G_{\min}=R_{\mathbf{M}}\left(
F_{\max}\right) \times L_{\mathbf{N}}(\beta)$ is a CP and $F\times
G=\theta\left( F_{\min}\right) \times\theta\left( G_{\min}\right) $.
\end{proof}

\begin{example}
Consider the diagonal component 
\begin{equation*}
F\times G=(\bullet \bullet \bullet )\bullet \bullet \times \bullet (\bullet
\bullet (\bullet \bullet ))
\end{equation*}%
of $\Delta _{K}^{\prime }(K_{5}).$ Then $F=21|43\cup 421|3$ is an
associahedral $2$-cell$,$ $\sigma =\max F=4|2|1|3$ is an associahedral vertex%
$,$ 
\begin{equation*}
F_{\max }=\overleftarrow{\sigma }_{1}|\overleftarrow{\sigma }_{2}=421|3,%
\text{ \ and \ }F_{\min }=21|43=R_{\left\{ 4\right\} }\left( 421|3\right) .
\end{equation*}%
Furthermore, 
\begin{equation*}
\beta =\overrightarrow{\sigma }_{3}|\overrightarrow{\sigma }_{2}|%
\overrightarrow{\sigma }_{1}=4|2|13,\min \beta _{1}=4|2|1|3=\max F,\text{ and%
}
\end{equation*}
\begin{equation*}
G_{\min }=L_{\left\{ 3\right\} }(4|2|13)=4|23|1.
\end{equation*}%
Thus $F\times G=\theta \left( 21|43\right) \times \theta \left(
4|23|1\right) .$\medskip
\end{example}

%\bigskip

\noindent\textbf{Addendum.} After this paper was written, B. Delcroix-Oger,
G. Laplante-Anfossi, V. Pilaud, and K. Stoeckl proved that $\Delta_{P}$ can
be recovered from $\Delta_{P}^{\prime}$ by an appropriate choice of chambers
in the fundamental hyperplane arrangements of the permutahedra (see \cite%
{DLPS}). The fact that all known diagonals on the associahedra agree (up to
mirror symmetry) follows immediately.\bigskip

\end{document}